\newtheorem{theorem}{Theorem}[section]
\newtheorem{lemma}[theorem]{Lemma}
\theoremstyle{definition}
\newtheorem{definition}[theorem]{Definition}
\theoremstyle{remark}
\numberwithin{equation}{section}
\newtheorem{proposition}[theorem]{Proposition}
\def\bfig{\vcenter\bgroup}
\def\efig{\egroup}
\renewenvironment{proof}{
{\medskip\par\noindent\it Proof.\ }}{\vskip 2ex\par}
\newcommand{\C}{\mathbb{C}}
\newcommand{\ga}{\alpha}
\newcommand{\gb}{\beta}
\newcommand{\R}{\mathbb{R}}
\newcommand{\comp}{\mathrel{\scriptstyle\circ}}
\def\dual{^{\scriptstyle\vee}}
\newcommand{\cinf}{{C^{\infty}}}
\newcommand{\diag}{\operatorname{diag}}
\newcommand{\Hom}{\operatorname{Hom}}
\newcommand{\hu}{\underline{h}}
\newcommand{\ju}{\underline{j}}
\newcommand{\term}[1]{\textbf{\textit{#1}}}
\newcommand{\tq}{\tilde{q}}
\newcommand{\ty}{\tilde{y}}
\newcommand{\Unit}{\text{U}}
\newcommand{\tred}[1]{{\color{black}{#1}}}
\begin{document}
\title%[Characteristic Numbers]
{Gysin Formulas and Equivariant Cohomology}

\author{Loring W. Tu}
\address{Department of Mathematics\\
Tufts University\\
Medford, MA 02155} 
\email{loring.tu@tufts.edu}
\urladdr{ltu.pages.tufts.edu}
\keywords{Gysyn formula, pushforward, equivariant cohomology,
  equivariant localization formula, projective bundle, equivariantly formal}
\subjclass[2000]{Primary: 55R10, 55N25, 14C17; Secondary: 
14M17}
\date{}

\begin{abstract}
Under Poincar\'e duality, a smooth map of compact oriented manifolds
induces a pushforward map in cohomology, called the \textit{Gysin
  map}.
It plays an important role in enumerative geometry.
Using the equivariant localization formula, the author
gave in 2017 a general formula for the Gysin map of a fiber
bundle with equivariantly formal fibers.
Equivariantly formal manifolds include all manifolds with cohomology in only even degrees such as complex projective spaces, Grassmannians, and flag manifolds as well as $G/H$, where $G$ is a compact Lie group and $H$ is a closed subgroup of maximal rank.
\tred{This article is a simplified exposition using the example of a projective bundle 
to illustrate the algorithm.}
\end{abstract}
%\thanks{}
\maketitle

\bigskip
\bigskip

% \vspace{-.8cm}
% \tableofcontents

%\setcounter{page}{1} \setcounter{thm}{0}

%\linenumbers*

\tred{To simplify the presentation, we will use homology and cohomology with real coefficients throughout the article}.
A smooth map $f\colon E \to M$ of compact oriented manifolds naturally
induces a \textit{pullback map} in cohomology.
Via Poincar\'e duality, the pushforward map in homology
$f_*\colon H_*(E) \to H_*(M)$ induces a \textit{pushforward map} in
cohomology, called the \term{Gysin map} and also denoted by $f_*$:
\[
\xymatrix{
H^k(E) \ar@{..>}[r]^-{f_*} \ar[d]_-{{\rm P.D.}}^-{\simeq} & 
H^{k-(e-m)}(M) \ar[d]^-{{\rm P.D.}}_-{\simeq} \\
H_{e-k}(E) \ar[r]_-{f_*} & H_{e-k}(M),
}
\]
where $e$ and $m$ are the dimensions of $E$ and $M$ respectively and
the vertical maps Poincar\'e duality isomorphisms.

Formulas for the Gysin map have played an important role in the theory
of determinantal varieties and in Brill-Noether theory in algebraic
geometry.
Special cases for projective bundles, Grassmann bundles, and flag
bundles have been obtained using various methods, including
combinatorial algebra and representation theory (\cite{pragacz},
\cite{fulton--pragacz}, \cite{akyildiz--carrell}, \cite{brion96}).

For example, let $V \to M$ be a smooth complex vector bundle of rank $r$ with
total Chern class
\[
  \tred{c(V) = \sum_{i=0}^r c_i(V) \in H^*(M), \text{ where } c_0(V) = 1,}
\]
and $f\colon P(V) \to M$ the associated projective bundle of $V$.
Over $P(V)$ there are the tautological subbundle $S$ and its dual
$S\dual$, \tred{the hyperplane bundle.}
Denote by $x = c_1(S\dual)$ the first Chern class of
$S\dual$, \tred{called the \term{hyperplane class}}.
Then the cohomology ring of $P(V)$ is
\[
  H^*\big( P(V) \big) = H^*(M)[x]/\big( x^r + c_1(V) x^{r-1} + \cdots
  + c_r(V) \big).
\]
Thus, the Gysin map $f_*\colon H^*\big( P(V) \big) \to H^*(M)$ is
completely described by $f_*(x^k)$ for all $k \ge 0$.
This in turn is encapsulated in the beautiful classical formula
\begin{equation} \label{e:projective}
  f_* \left( \frac{1}{1-x} \right) = \frac{1}{c(V)},
\end{equation}
or
\[
  f_* \left( \sum_{k=0}^{\infty} x^k \right) = \frac{1}{1 + c_1(V) +
    \cdots + c_r(V)}
\]
(see \cite[Eq.~4.3, p.~318]{arbarello--cornalba--griffiths--harris} for a proof).

In \cite{tu2017} it is shown that the localization formula in
equivariant cohomology provides a systematic method of calculating the
Gysin map.
With this method, prior Gysin formulas can be generalized to any fiber
bundle with equivariantly formal fibers.
The purpose of this article is to illustrate the method with one key
example,
the Gysin formula for an associated projective bundle.
In fact, this may be the best way to understand the algorithm.
In particular, we rederive formula \eqref{e:projective} for a
projective bundle.
While the classical proof of this particular case is quite short, the
method using equivariant cohomology has wider applicability.

\section{Universal Fiber Bundles}

In this section we work in the continuous category.  For a topological
group $G$, just as principal $G$-bundles have a universal bundle, so
we will show that fiber bundles with fiber $F$ and structure group $G$
also have a universal bundle.

\begin{definition}
  Let $f\colon E \to M$ be a fiber bundle with fiber $F$ and structure
  group $G$.  An \term{associated principal bundle} of $f$ is a
  principal $G$-bundle $P \to M$ such that $E \to M$ is isomorphic to
  the associated fiber bundle $P \times_G F \to M$.
  \end{definition}

\begin{proposition}
 Every fiber bundle $f\colon E \to M$ with fiber $F$ and
  structure group $G$ has an associated principal $G$-bundle.
\end{proposition}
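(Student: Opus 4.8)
The plan is to recover the structure-group data of $f$ as a $G$-valued cocycle and to reconstruct a principal bundle from it by the standard clutching construction, then to verify that reassembling this principal bundle with fiber $F$ returns $E$. By definition of a fiber bundle with fiber $F$ and structure group $G$, there is a trivializing open cover $\{U_\alpha\}$ of $M$ and local trivializations $\phi_\alpha\colon f^{-1}(U_\alpha) \xrightarrow{\sim} U_\alpha \times F$ whose transition maps act through the fixed action of $G$ on $F$, that is, $\phi_\alpha \circ \phi_\beta^{-1}(x,y) = \big(x,\, g_{\alpha\beta}(x)\cdot y\big)$ for continuous functions $g_{\alpha\beta}\colon U_\alpha \cap U_\beta \to G$ satisfying the cocycle conditions $g_{\alpha\alpha} = e$ and $g_{\alpha\beta}\, g_{\beta\gamma} = g_{\alpha\gamma}$ on triple overlaps.

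First I would build the candidate principal bundle by gluing. Set
\[
  P = \Big( \bigsqcup_\alpha U_\alpha \times G \Big)\Big/ \sim ,
\]
where $(x, h) \in U_\beta \times G$ is identified with $\big(x,\, g_{\alpha\beta}(x)\, h\big) \in U_\alpha \times G$ for $x \in U_\alpha \cap U_\beta$. The cocycle conditions are exactly what is needed for $\sim$ to be an equivalence relation, so $P$ is well defined; the projection to $M$ and the right $G$-action $[(x,h)] \cdot a = [(x, h a)]$ descend from the obvious structures on each chart. One checks directly that this right action is free and that each $U_\alpha \times G \to P$ is a trivialization, so that $P \to M$ is a principal $G$-bundle.

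Finally I would identify $E$ with $P \times_G F$. Writing $[p, y]$ for the class of $(p, y)$ in $P \times_G F$, trivialize the associated bundle over $U_\alpha$ by the canonical section $s_\alpha(x) = [(x, e)]$, sending $[s_\alpha(x), y] \mapsto (x, y)$. Using the gluing relation $s_\beta(x) = s_\alpha(x)\, g_{\alpha\beta}(x)$ together with the identity $[p a, y] = [p,\, a \cdot y]$, a short computation shows that the transition functions of $P \times_G F$ are again $(x, y) \mapsto \big(x,\, g_{\alpha\beta}(x)\cdot y\big)$ --- precisely the transition functions defining $E$. Hence the local trivializations patch to a bundle isomorphism $P \times_G F \xrightarrow{\sim} E$ over $M$, exhibiting $P$ as an associated principal bundle of $f$. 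The one step deserving care, and the only place the argument can fail, is this final matching of cocycles: everything hinges on the clutching functions of $P$ having been chosen to be the very transition functions of $E$, so that the associated-bundle construction reproduces $E$ on the nose. The remaining verifications are routine bookkeeping with the cocycle identities.
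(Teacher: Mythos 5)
Your proposal is correct and follows essentially the same route as the paper: both construct $P$ by gluing the pieces $U_\alpha \times G$ along the transition functions $g_{\alpha\beta}$ of $E$, and both conclude that $P \times_G F \cong E$ because the associated bundle has the very same cocycle relative to the same cover. Your write-up simply makes explicit some verifications (the cocycle conditions, the section-based matching of transition functions) that the paper leaves as routine.
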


\begin{proof}
  Let $\{ U_{\ga}\}$ be a trivializing open cover of $M$ for $E$ and
  $\{g_{\ga\gb}\colon U_{\ga} \cap U_{\gb} \to G\}$ the transition
  functions.
  Write $U_{\ga\gb} = U_{\ga} \cap U_{\gb}$.
  We construct the space $P = \big( \prod (U_{\ga} \times G)\big)/\sim$ by
  identifying
  \[
    (m, g_{\ga}) \sim \big( m, g_{\gb\ga}(m) g_{\ga}\big) \quad\text{
      for }
    \tred{(m, g_{\ga}) \in U_{\ga} \times G, \  \big( m, g_{\gb\ga}(m) g_{\ga}\big)\in U_{\gb} \times G,}
  \]
  if $m\in U_{\ga\gb}$. 
  The natural projection $\varphi\colon P \to M$, $\varphi \big([m, g_{\ga}]\big) = m$, will be a principal $G$-bundle with
  transition functions $\{g_{\ga\gb}\colon$ $U_{\ga\gb} \to G\}$.
  The fiber bundle $f\colon E \to M$ presumes an action of $G$ on $F$.
  With this action, the associated bundle $P \times_G F \to M$ is a
  fiber bundle with fiber $F$ and group $G$.
  Since it has the same transition functions $\{g_{\ga\gb}\}$ as $E$
  relative to the open cover $\{U_{\ga}\}$, the two bundles $P \times_G F$ and
  $E$ are isomorphic as fiber bundles over $M$.\qed
\end{proof}

By Milnor \cite{milnor}, every principal $G$-bundle has a universal principal $G$-bundle $\ga\colon EG \to BG$.  It follows from this that the same is true for fiber bundles with fiber $F$ and group $G$.
\tred{When a topological group acts on the left on a space $F$, the \term{homotopy quotient} $F_G$ of $F$ by $G$ is the quotient of $EG \times F$ by the diagonal action  $(e,x) \cdot g = (eg, g^{-1}x)$ for $(e,x) \in EG \times F$ and $g\in G$. It is a fiber bundle over $BG$ with fiber $F$ via $(e,x) \mapsto \ga(x) \in BG$.}

\begin{theorem} \label{t:universal}
  Let $f\colon E \to M$ be a fiber bundle with fiber $F$ and structure group
  $G$ and let $F_G$ be the homotopy quotient of $F$ by $G$.
  Then there is a map $\hu\colon M \to BG$ such that $E$ is isomorphic
  to the pullback $\hu^*(F_G)$ of the bundle $F_G\to BG$.
  \end{theorem}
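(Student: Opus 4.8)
The plan is to assemble the statement from the Proposition just proved together with Milnor's classification theorem for principal bundles, the one extra ingredient being that forming an associated bundle commutes with pullback. First I would apply the Proposition to obtain an associated principal $G$-bundle $P \to M$, so that $E$ is isomorphic over $M$ to the associated fiber bundle $P \times_G F$.

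Next I would use the universality of $\ga\colon EG \to BG$: by Milnor's theorem every principal $G$-bundle over $M$ is, up to isomorphism, pulled back from $EG$ along a map $M \to BG$. Applying this to $P$ yields a classifying map $\hu\colon M \to BG$ together with an isomorphism $P \cong \hu^*(EG)$ of principal $G$-bundles.

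The core of the argument is a naturality statement: for the classifying map $\hu\colon M \to BG$ above and any principal $G$-bundle $\pi\colon Q \to BG$, there is a canonical isomorphism of fiber bundles over $M$,
\[
  (\hu^* Q) \times_G F \;\cong\; \hu^*\big( Q \times_G F \big).
\]
I would establish this by writing down the explicit map $\big[(m,q),x\big] \mapsto \big(m, [q,x]\big)$, where $(m,q) \in \hu^* Q \subset M \times Q$ and $x \in F$, and then checking three things: that it is well defined on $G$-orbits, since $\big((m,qg),g^{-1}x\big)$ and $\big((m,q),x\big)$ both map to the single point $\big(m,[q,x]\big)$; that it covers the identity on $M$ and is a bijection on each fiber; and that it is continuous with continuous inverse. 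All three are routine verifications from the definitions of pullback and of the associated-bundle construction.

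Finally, specializing the naturality isomorphism to $Q = EG$ and recalling that $F_G = EG \times_G F$ by definition, I would chain
\[
  E \;\cong\; P \times_G F \;\cong\; (\hu^* EG) \times_G F \;\cong\; \hu^*\big( EG \times_G F \big) \;=\; \hu^*(F_G),
\]
which is exactly the asserted isomorphism. The only step needing genuine care is the naturality lemma; once the explicit comparison map is written down, its verification is mechanical, so I expect the main work to be bookkeeping the $G$-actions consistently rather than any conceptual obstacle.
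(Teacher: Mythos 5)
Your proposal is correct and follows essentially the same route as the paper: obtain the associated principal bundle $P$ from the Proposition, classify it by a map $\hu\colon M \to BG$ with $P \cong \hu^*(EG)$, and conclude that $E \cong P \times_G F$ pulls back from $F_G = EG \times_G F$. The only difference is one of detail: the paper simply asserts that the bundle map $P \to EG$ induces $h\colon P\times_G F \to EG\times_G F$ making $E$ the pullback $\hu^*(EG\times_G F)$, whereas you spell out the underlying naturality isomorphism $(\hu^*Q)\times_G F \cong \hu^*(Q\times_G F)$ explicitly.
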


  \begin{proof}
    Let $\varphi\colon P \to M$ be the associated principal $G$-bundle of the fiber
    bundle $f\colon E \to M$.
If $\hu\colon M
\to BG$ is the classifying map for the principal bundle $P$, then there is a commutative
diagram
\[
\xymatrix{
P \ar[d]_-{\varphi} \ar[r] & EG \ar[d] \\
M \ar[r]_-{\hu} & BG
}
\]
such that $P$ is isomorphic to the pullback $\hu^*(EG)$.
The map $P \to EG$ induces a map $h\colon P \times_G F \to EG \times_G
F$ and a commutative diagram

\begin{equation} \label{e:universal}
\begin{xy}
(1,10.5)*{E =};
(6,5)*{\xymatrix{
P\times_G F \ar[d]_-f \ar[r]^-h & 
EG \times_G F \ar[d]^-{\pi} \\
M \ar[r]_-{\underline{h}} & BG.}
};
(52,10)*{=F_G};
\end{xy}
\end{equation}
\vspace{.5in}

\noindent
such that $E$ is isomorphic to the pullback $\underline{h}^*(EG \times_G F)$.
\qed\end{proof}

Thus, $F_G \to BG$ can be viewed as a universal fiber bundle with fiber $F$ and
structure group $G$.

\section{Comparison of $G$- and $T$-Equivariant Cohomology}

Let $G$ be a compact connected Lie group, $T$ a maximal torus,
$N_G(T)$ the normalizer of $T$ in $G$, and $W = N_G(T)/T$ the Weyl
group of $T$ in $G$.
Suppose $G$ acts freely on the right on a topological space $X$ 
\tred{in such a way}
that $X \to X/G$ is a principal $G$-bundle.
Then the natural projection $X/T \to X/G$ is a fiber bundle with fiber
$G/T$.
The Weyl group $W$ acts on $X/T$ by
\[
  (xT) \cdot w = xwT \quad\text{ for } x \in X \text{ and } w\in W.
\]
This action induces an action on $W$ on the cohomology $H^*(X/T)$.

\begin{lemma}[{\cite[\S 3, Lemma 4]{tu2010}}] \label{l:invariants}
  With the notations above, the rational cohomology of $X/G$ is the
  ring of $W$-invariants of the rational cohomology of $X/T$:
  \[
    H^*(X/G) \simeq H^*(X/T)^W.
  \]
  \end{lemma}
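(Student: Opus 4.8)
The plan is to realize the projection $p\colon X/T \to X/G$ as a fiber bundle with fiber $G/T$ and to run a Leray--Hirsch argument that is compatible with the Weyl group action. First I would identify $X/T$ with the associated bundle $X \times_G (G/T)$, the point $[x,gT]$ corresponding to $xgT$, so that $X/T$ is the pullback under the classifying map $\underline{h}\colon X/G \to BG$ of the universal bundle $BT = EG \times_G (G/T) \to BG$. Since $W$ acts on $X/T$ along the fibers of $p$, the map $p$ is $W$-invariant, and therefore $p^*\colon H^*(X/G) \to H^*(X/T)$ lands in the invariant subring $H^*(X/T)^W$. The goal is then to show that this corestriction is an isomorphism.

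Next I would invoke Borel's theorem in the universal case: $H^*(BT;\Q)$ is a free module over $H^*(BG;\Q)$ whose generators restrict to a basis of $H^*(G/T;\Q)$ on the fiber. Pulling these generators back by $\underline{h}$ produces classes in $H^*(X/T)$ that restrict to a basis of $H^*(G/T)$ on every fiber of $p$, so the Leray--Hirsch theorem applies. Consequently the Serre spectral sequence of $G/T \hookrightarrow X/T \xrightarrow{p} X/G$ collapses at $E_2$, giving $E_\infty \cong H^*(X/G) \otimes H^*(G/T)$; in particular $p^*$ is injective, and its image lies in the fiber-degree-$0$ summand $H^*(X/G)\otimes H^0(G/T)$.

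Finally I would track the $W$-action through the Serre filtration, which $W$ preserves because it acts fiberwise and trivially on the base. On the associated graded $\operatorname{gr} H^*(X/T) \cong H^*(X/G)\otimes H^*(G/T)$ the group $W$ acts as the identity on the first factor and by the standard Weyl action on the second. Because $\Q[W]$ is semisimple, the functor of $W$-invariants is exact and commutes with passage to the associated graded, so $\operatorname{gr}\big(H^*(X/T)^W\big) \cong H^*(X/G)\otimes H^*(G/T)^W$. By Chevalley's description of $H^*(G/T;\Q)$ as the coinvariant algebra, $H^*(G/T)^W = \Q$ is concentrated in degree $0$ (for example $H^*(\cpone)$ carries the sign action of $\Z/2$ in degree $2$ when $G=\SU(2)$, leaving only the constants invariant). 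Hence $\dim H^n(X/T)^W = \dim H^n(X/G)$ for every $n$, and since $p^*H^*(X/G)$ is already contained in $H^*(X/T)^W$ and injective, a dimension count forces $p^*H^*(X/G) = H^*(X/T)^W$.

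The main obstacle is supplying the Leray--Hirsch classes, that is, proving $H^*(X/T)$ is free over $H^*(X/G)$; this is exactly where the comparison with the universal bundle $BT \to BG$ and Borel's theorem are indispensable, the arbitrary base $X/G$ contributing nothing new once the fiberwise generators are in hand. A secondary technical point is that the Leray--Hirsch module splitting need not be $W$-equivariant, which is why I argue on the ($W$-stable) Serre filtration and use exactness of invariants over $\Q[W]$ rather than on the splitting itself.
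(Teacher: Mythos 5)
Your proof is correct, but it takes a genuinely different route from the one the paper relies on. The paper does not reprove the lemma: it cites \cite{tu2010}, where the argument factors the projection through the intermediate quotient $X/N_G(T)$. Since $G$ acts freely on $X$, the Weyl group $W$ acts freely on $X/T$ with quotient $X/N_G(T)$, so $H^*(X/N_G(T);\Q) \simeq H^*(X/T;\Q)^W$ by the standard transfer argument for finite regular covers; then $X/N_G(T) \to X/G$ is a fiber bundle whose fiber $G/N_G(T)$ is $\Q$-acyclic, so its Leray spectral sequence degenerates and $H^*(X/G) \simeq H^*(X/N_G(T))$. You instead reduce to the universal case: you import Borel's theorem for $BT \to BG$ (freeness of $H^*(BT;\Q)$ over $H^*(BG;\Q)$ with generators restricting to a basis on the fiber $G/T$), pull the generators back to $X/T$, apply Leray--Hirsch, and then track the $W$-action through the $W$-stable Serre filtration using semisimplicity of $\Q[W]$ and Chevalley's computation $H^*(G/T;\Q)^W = \Q$. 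The trade-off: the covering-space proof is more self-contained --- it needs no universal input and in fact yields Borel's theorem as the special case $X = EG$, whereas your argument consumes Borel's theorem as a black box (not circular, but logically heavier, since that theorem is precisely the universal instance of the lemma). In exchange, your proof delivers more structure: it exhibits $H^*(X/T)$ as a free $H^*(X/G)$-module of rank $|W|$, very much in the spirit of the Leray--Hirsch argument the paper uses elsewhere (Theorem~\ref{t:cohomology}). Two small caveats: your closing dimension count tacitly assumes each $H^n(X/G)$ is finite-dimensional; this holds in the paper's application ($X = EG \times M$ with $M$ compact, taking Milnor-type finite approximations of $EG$), and in general it can be avoided by noting --- as you essentially do --- that the image of $p^*$ is exactly the bottom filtration step $F^nH^n(X/T)$ (fiber degree $0$), while exactness of $W$-invariants shows the invariants are concentrated in that same step, so the two subspaces coincide without any counting.
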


  Given a left $G$-space $M$, with the diagonal action of $G$ on $X =
  EG \times M$,  this lemma gives the $G$-equivariant cohomology of
  $M$ as the ring of $W$-invariants of the $T$-equivariant cohomology of $M$:
  \begin{equation} \label{e:invariants}
    H_G^*(M) \simeq H_T^*(M)^W.
  \end{equation}

  \section{Fiber Bundles with Equivariantly Formal Fibers}

  If $X$ is a topological space on which a topological group $G$ acts,
  let $X_G = (EG \times X)/G$ be the homotopy quotient.
  Then $X_G \to BG$ is a fiber bundle with fiber $X$ \tred{\cite[Prop.~4.5, p.~33]{tu2020}}.
  The inclusion $X \hookrightarrow X_G$ of a fiber induces a
  restriction map $H_G^*(X) \to H^*(X)$.

  \begin{definition}
    A topological space $X$ with a group action by $G$ is said to be
    \term{equivariantly formal} if the restriction map $H_G^*(X) \to
    H^*(X)$ from equivariant cohomology to ordinary cohomology is
    surjective.
  \end{definition}

  We will now determine the cohomology of a fiber bundle $f\colon E
  \to M$ with equivariantly formal fiber $F$ and structure group $G$.
  As in Theorem~\ref{t:universal}, the fiber bundle $E$
  has an associated principal bundle $\varphi\colon P \to M$ such 
  that $E= P \times_G F$ and $E$  maps to \tred{the}
  universal bundle $F_G$.
  \tred{Fix a point $p_0\in P$. Then there is a commutative diagram}
  \begin{equation} \label{e:1}
\xymatrix{
  F\ \ar@{^{(}->}@<-.1ex>[r]^-{i_E}  \ar@/^1.5pc/ @{.>}[rr]^i
  &E \ar[d]_-f \ar[r]^-h & F_G \ar[d]^-{\pi_G} \\
& M \ar[r]_-{\hu} & BG,
}
\qquad
\xymatrix{
z\ \ar@{|->}@<-.1ex>[r]^-{i_E}  \ar@/^1.5pc/ @{.>}[rr]^i
  &[p_0, z]  \ar@{|->}[r]^-h & [\varphi(p_0), z] 
  }
\end{equation}
\tred{for} $z \in F$.

\begin{lemma}
  Denote by $i_E\colon F \to E$ and $i\colon F \to F_G$ the inclusions
  of $F$ as fibers of $E \to M$ and $F_G \to BG$ respectively.
  Then $i = h \comp i_E$,  where $h\colon E \to F_G$ is the
  map
  defined in \eqref{e:universal}.
\end{lemma}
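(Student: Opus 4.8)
The plan is to unwind the three maps and then verify the asserted identity pointwise on $F$, the only real work being to keep the framings consistent. Recall that $h\colon E = P\times_G F \to EG\times_G F = F_G$ is obtained by applying the Borel construction to the classifying map of $P$; writing $\psi\colon P \to EG$ for that $G$-equivariant map (the top arrow of \eqref{e:universal}), which covers $\hu$ in the sense that $\ga\comp\psi = \hu\comp\varphi$, one has $h\big([p,z]\big) = [\psi(p),z]$ for every $[p,z]\in E$. Set $m_0 = \varphi(p_0)\in M$. The inclusion $i_E$ frames the fiber $f^{-1}(m_0)$ by the chosen point $p_0$, so that $i_E(z) = [p_0,z]$, while $i$ frames the fiber $\pi_G^{-1}\big(\hu(m_0)\big)$ of $F_G\to BG$ by the point $\psi(p_0)\in EG$, so that $i(z) = [\psi(p_0),z]$.

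First I would check that $h$ really carries the fiber of $E$ over $m_0$ into the fiber of $F_G$ over $\hu(m_0)$, so that both sides of the claimed identity land in the same fiber and the comparison is meaningful. This is immediate from $\pi_G\big(h([p_0,z])\big) = \pi_G\big([\psi(p_0),z]\big) = \ga\big(\psi(p_0)\big) = \hu\big(\varphi(p_0)\big) = \hu(m_0)$, the third equality being the statement that $\psi$ covers $\hu$. Granting this, the identity itself collapses to a one-line computation: for every $z\in F$,
\[
(h\comp i_E)(z) = h\big([p_0,z]\big) = [\psi(p_0),z] = i(z),
\]
which is precisely the chain of assignments recorded in the right-hand diagram of \eqref{e:1}.

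The one genuine point, and the step I would be most careful about, is the bookkeeping of framings: the phrase ``the inclusion of $F$ as a fiber'' of $F_G\to BG$ is well defined only after fixing both a base point of $BG$ and a lift of it to $EG$, and the equality $i = h\comp i_E$ holds \emph{on the nose}, rather than merely up to the residual $G$-action on $EG\times_G F$, exactly because we take the base point to be $\hu(m_0)$ and the lift to be $\psi(p_0)$. Since $\psi(p_0)$ lies over $\hu(m_0)$ by the covering relation $\ga\comp\psi = \hu\comp\varphi$, this choice is legitimate, and the remainder is the routine unwinding above. I do not expect any obstacle beyond keeping these choices coherent.
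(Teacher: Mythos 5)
Your proof is correct and follows essentially the same route as the paper: fix $p_0\in P$, unwind the definitions of $i_E$, $h$, and $i$ in terms of the Borel construction, and verify $h([p_0,z])=[\psi(p_0),z]=i(z)$ pointwise. Your extra care in distinguishing the classifying map (your $\psi$) from the bundle projection $\varphi$, and in checking that the choices of base point and lift make the fiber inclusions well defined, only tightens the paper's argument (which overloads the symbol $\varphi$ for both maps) without changing it.
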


\begin{proof}
  Let $\varphi\colon P\to M$ be the principal $G$-bundle associated to the fiber
  bundle $f\colon E \to M$ and $\varphi\colon P \to EG$ its
  classifying map.
  Fix a base point $p_0 \in P$.
  Then
  \[
    h\colon E := P \times_G F \to EG\times_G F :=F_G
  \]
  is given by $[p_0, z] \mapsto [\varphi(p_0), z]$ for $[p_0,z] \in P
  \times_G F$,
  $i\colon F \to F_G$ is given by $z \mapsto [\varphi(p_0), z] \in
  EG\times_GF$, and
  $i_E\colon F \to E$ is given by $z \mapsto [p_0, z] \in P \times_G
  F$.
  It follows that
  \[
    (h\comp i_E)(z) = h\big( [p_0, z]\big) = [ \varphi(p_0), z] = i(z)
    \in F_G.  \tag*{\qed}
  \]
\end{proof}

It follows from this lemma that the map $h^*\colon H_G^*(F) \to H^*(E)$ is \tred{part of the restriction map $i^* = i_E^* \comp h^*$.}
By the push-pull formula (\cite[Prop.~8.3]{borel--hirzebruch} or \cite[Lem.~1.5]{chern}), the commutative diagram \eqref{e:1} induces
a commutative diagram in cohomology
  \begin{equation} \label{e:cohomology}
  \bfig
\xymatrix{
  H^*(F) 
  &H^*(E) \ar[d]_-{f_*} \ar[l]_-{i_E^*} & H_G^*(F) \ar[l]_-{h^*}
  \ar[d]^-{\pi_{G*}} \ar@/_1.75pc/ [ll]_-{i^*}
  \\
& H^*(M) & H^*(BG). \ar[l]^-{\hu^*}
}
\efig
\end{equation}

\begin{theorem} \label{t:cohomology}
    Let $f\colon E \to M$ be a fiber bundle whose fiber $F$ is
    $G$-equivariantly formal and has finite-dimensional cohomology with
    basis $a_1, \ldots, a_r$, where $G$ is the structure group of the bundle.
    Let $b_1, \ldots, b_r \in H_G^*(F) $ be \tred{equivariant extensions of $a_1, \ldots, a_r$, respectively.}
    Then the cohomology of $E$ is the $f^*H^*(M)$-module with basis
    $h^*b_1, \ldots, h^*b_r$:
    \begin{align*}
      H^*(E) &= \big(f^*H^*(M)\big)\{ h^*b_1, \ldots, h^*b_r\}\\
             &\simeq H^*(M) \otimes_{\R} H^*(F).
    \end{align*}
  \end{theorem}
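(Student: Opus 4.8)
The plan is to recognize this statement as a direct application of the Leray--Hirsch theorem to the fiber bundle $f\colon E \to M$. The entire content of Leray--Hirsch is the conclusion we want, namely that $H^*(E)$ is a free $f^*H^*(M)$-module with a basis coming from classes on $E$ whose fiberwise restrictions form a basis of $H^*(F)$; so the real work is to produce such classes, and the natural candidates are the pullbacks $h^*b_1, \ldots, h^*b_r \in H^*(E)$ of the equivariant extensions along the map $h\colon E \to F_G$ of \eqref{e:universal}. Everything else will be formal once these are shown to restrict correctly on each fiber.

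First I would compute the restriction of each $h^*b_j$ to a fiber of $E \to M$. By the preceding lemma we have $i = h \comp i_E$, and therefore
\[
  i_E^*(h^*b_j) = (h \comp i_E)^* b_j = i^* b_j = a_j,
\]
where the last equality is exactly the defining property of an equivariant extension: $b_j$ restricts to $a_j$ under $i^*\colon H_G^*(F) \to H^*(F)$. This is where $G$-equivariant formality of $F$ is used, since surjectivity of $i^*$ is precisely what guarantees that the chosen basis $a_1,\ldots,a_r$ of $H^*(F)$ admits such extensions $b_1,\ldots,b_r$. Thus the restrictions $i_E^*(h^*b_1),\ldots,i_E^*(h^*b_r)$ are $a_1,\ldots,a_r$, a basis of $H^*(F)$; as $M$ is connected and the $h^*b_j$ are globally defined on $E$, the same holds on every fiber, which is the hypothesis required by Leray--Hirsch.

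With the fiber-restriction identity in hand, Leray--Hirsch applies: because we work throughout with real (hence field) coefficients and $H^*(F)$ is finite-dimensional, the map
\[
  H^*(M) \otimes_{\R} H^*(F) \to H^*(E), \qquad \alpha \otimes a_j \mapsto f^*(\alpha)\, h^*b_j,
\]
is an isomorphism of $H^*(M)$-modules, which is the asserted statement
\[
  H^*(E) = \big( f^*H^*(M) \big)\{ h^*b_1, \ldots, h^*b_r \} \simeq H^*(M) \otimes_{\R} H^*(F).
\]
Mechanically, the isomorphism is forced by the degeneration of the Serre spectral sequence of $f$, which in turn follows from the surjectivity of $i_E^*$ established above. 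The main obstacle is not any single hard estimate but rather the careful bookkeeping of the two ways of restricting to a fiber, $i^*$ versus $i_E^* \comp h^*$, supplied by the lemma, together with remembering that the final isomorphism is one of $H^*(M)$-modules and not, in general, of rings; once the fiber restrictions are correctly identified, the conclusion is immediate.
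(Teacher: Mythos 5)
Your proposal is correct and follows essentially the same route as the paper: you use the lemma $i = h \comp i_E$ to see that $i_E^*(h^*b_j) = i^*b_j = a_j$, so the classes $h^*b_1, \ldots, h^*b_r$ restrict to a basis of $H^*(F)$ on each fiber, and then you invoke the Leray--Hirsch theorem, exactly as the paper does (citing Bott--Tu). The additional remarks you make (field coefficients, Serre spectral sequence degeneration) are consistent elaborations rather than a different argument.
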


  \begin{proof}
    Since $F$ is equivariantly formal, the restriction map $i^*\colon
    H_G^*(F) \to H^*(F)$ is surjective.
    From the top row of diagram \eqref{e:cohomology}  we see that
    $h^*b_1, \ldots, h^*b_r$ will restrict to the generators $a_1,
    \ldots, a_r$ on the fiber $F$.
    By the Leray--Hirsch theorem \cite[Th.~5.11, p.~50]{bott--tu}, the cohomology
    of $E$ is as claimed.
    \qed\end{proof}

  For $b\in H_G^*(F)$, we call $h^*b \in H^*(E)$  a \term{fiber class} and an element of $f^*H^*(M) \subset H^*(E)$ a \term{basic class}.  By Theorem~\ref{t:cohomology}, the cohmology of a fiber bundle $E$ with equivariantly formal fiber $F$ is generated as a ring by basic classes and fiber classes.

  \section{Gysin formulas: an Algorithm}

  In this section we explain how to use the equivariant localization
  formula to calculate the Gysin map for a $\cinf$ fiber bundle
  $f\colon E \to M$ with equivariantly formal fibers $F$ and structure
  group a compact connected Lie group $G$.
  In order to apply Theorem~\ref{t:cohomology}, we will assume that
  the fiber has finite-dimensional cohomology.

  By Theorem~\ref{t:cohomology}, every element of $H^*(E)$ is an
  $\R$-linear combination of cohomology classes of the form
  $(f^*a)h^*b$, where $a \in H^*(M)$ and $b \in H_G^*(F)$.
  By \tred{the projection formula \cite[Prop.~6.15]{bott--tu},}
  \[
    f_*\big( (f^*a) h^*b \big) = a f_*h^*b.
  \]
  Thus, it suffices to calculate the pushforward $f_*$ of fiber
  classes $h^*b \in H^*(E)$.
  From the commutative diagram \eqref{e:cohomology},
  \begin{equation} \label{e:pushpull}
    f_* h^* b = \hu^* \pi_{G*} b.
  \end{equation}
  The problem here is that there is no localization theorem available
  for a general compact Lie group $G$ with which to calculate
  $\pi_{G*}$.

  Let $T$ be a maximal torus of the compact connected Lie group $G$.
  The projections $j\colon F_T \to F_G$ and $\ju\colon BT \to BG$ fit
  into a commutative diagram
\begin{equation} \label{e:quotient}
\bfig
\xymatrix{
F_G \ar[d]_-{\pi_G} & F_T \ar[l]_-{j} \ar[d]^-{\pi_T} \\
BG & BT. \ar[l]^-{\ju}
}
\efig
\end{equation}
By the push-pull formula, this diagram induces the second square of the commutative diagram
in cohomology
\begin{equation} \label{e:equivariant}
\bfig
\xymatrix{
H^*(E) \ar[d]_-{f_*} & H_G^*(F) \ar[l]_-{h^*} \ar[d]_-{\pi_{G*}}  \ar@{^{(}->}[r]^{j^*}& H_T^*(F)  \ar[d]^-{\pi_{T*}} \\
H^*(M) &H^*(BG) \ar[l]^-{{\hu}^*}\ar@{^{(}->}[r]_-{\ju^*} & H^*(BT). 
}
\efig
\end{equation}
The first square comes from \eqref{e:cohomology}.
\tred{From the second square, for $b \in H_G^*(F)$,
\[
  \ju^* \pi_{G*} b = \pi_{T*}(j^*b).
\]
By Lemma~\ref{l:invariants}, the horizontal maps $j^*$ and $\ju^*$ in
\eqref{e:equivariant} are inclusions.
So we may omit them and write more simply,
\[
\pi_{G*}(b) = \pi_{T*}(b),
\]
which can be evaluated using the equivariant localization formula.}
The first square of \eqref{e:equivariant} then gives 
\[
f_*(h^*b) = \hu^*(\pi_{G*} b) = \hu^*(\pi_{T*} b).
\]
Since $f^*\colon H^*(M) \to H^*(E)$ is injective, we can also write
the formula for $f_*(h^*b)$ in the form
\[
f^* f_* (h^*b) = f^* \hu^*(\pi_{G*} b) = h^* \pi_G^*( \pi_{G*} b)= h^* \pi_G^*( \pi_{T*} b).
\]
In the remaining sections we will carry out this algorithm for $f_*\big(1/(1-x)\big)$ in case $f\colon P(V) \to M$ is the associated projective
bundle of a rank 3 complex vector bundle $V \to M$.

\section{The Cohomology of a Projective Bundle and of $\C P^2$} \label{s:projective}

%Let $E \to M$ be a $\cinf$ complex vector bundle of rank $3$ over a
%smooth manifold $M$ and $f\colon P(E) \to M$ its associated projective
%bundle.
%On $P(E)$, there is a tautological subbundle $S$.
%Denote by $\calo(1) = S\dual$ the hyperplane line bundle with first
%Chern class $x = c_1\big( \calo(1) \big)$.
%
%The map $f\colon P(E) \to M$ induces an injection in cohomology
%$f^*\colon H^*(M) \to H^*\big( P(E) \big)$.
%We may therefore view $H^*(M)$ as a subring of $H^*\big(  P(E) \big)$
%and abuse notation by writing $f^*H^*(M)$ as $H^*(M) \subset H^*\big(
%P(E) \big)$.
%It is well-known that the cohomology ring of $P(E)$ is
%\[
%  H^*\big(  P(E) \big) = H^*(M)[x]/\big( x^3 + c_1(E) x^2 + c_2(E) x +
%  c_3(E) \big),
%\]
%where $c_i(E) := f^* c_i(E)$ are the Chern classes of the vector
%bundle $E$ \cite[Eq.~(20.7), p.~270]{bott--tu}.

Let $V \to M$ be a $\cinf$ complex vector bundle of rank 3 over a smooth compact oriented manifold $M$ and $f\colon P(V) \to M$ its associated projective bundle.
\tred{The bundle $f\colon P(V) \to M$ is a fiber bundle with fiber $\C P^2$ and structure group $G = \Unit(3)$,
where the action of $\Unit(3)$ on $\C P^2$ is induced from the standard action of $\Unit(3)$ on $\C^3$ by left multiplication.}
Over $P(V)$ the tautological subbundle \tred{$S_{P(V)}$} of rank 1 and the tautological quotient bundle \tred{$Q_{P(V)}$} of rank 2 fit into a short exact sequence of vector bundles
\[
\tred{0\to S_{P(V)} \to f^*V \to Q_{P(V)} \to 0.}
\]
If $y, q_1, q_2$ are the Chern classes $c_1(S_{P(V)}), c_1(Q_{P(V)}), c_2(Q_{P(V)})$ respectively, then by the Whitney product formula,
\[
(1+y)(1+q_1+q_2) = f^* c(V).
\]
With $x = c_1(S\dual)= -y$, the cohomology of $P(V)$ has two descriptions:
\begin{align*}
H^*\big( P(V)\big) &= \frac{H^*(M)[x]}{\big(x^3 +c_1(V) x^2+c_2(V) x + c_3(V)\big)}
\intertext{or}
H^*\big( P(V)\big)&= \frac{H^*(M)[y, q_1, q_2]}{\big( (1+y)(1+q_1+q_2) = f^* c(V)\big)}, \quad y=-x.
\end{align*}

\tred{By Thereom~\ref{t:universal}, there is a bundle map of bundles with fiber $\C P^2$:
\[
\xymatrix{
P(V) \ar[d] \ar[r]^-h & (\C P^2)_G \ar[d] \\
M \ar[r]_-{\hu} & BG.
}
\]
Over the homotopy quotient $(\C P^2)_G$, there is a tautological line subbundle that we denote by $S_G$ (because it is the homotopy quotient of the tautological subbundle $S$ over $\C P^2$).
Since $P(V) = \hu^*\big( (\C P^2)_G \big)$,
the bundle map $h\colon P(V) \to (\C P^2)_G$ maps each fiber $\C P^2$ of $P(V) \to M$
isomorphically to a fiber $\C P^2$ of $(\C P^2)_G \to BG$.
Therefore, the tautological subbundle $S_{P(V)}$ is the pullback $h^*(S_G)$.
Denote the first Chern class of $S_G$ by $\ty$.
By the naturality of Chern classes,
\[
y=c_1(S_{P(V)})= c_1 \big( h^*(S_G)\big) = h^* c_1(S_G) = h^* \ty.
\]
Similary, if $Q_G$ denotes the tautological quotient bundle over $(\C P^2)_G$ and $\tilde{q}_i = c_i(Q_G)$,
then $q_i = h^* \tilde{q}_i$, $i= 1,2$.}

%Applying Diagram~\eqref{e:cohomology} to the fiber bundle $E=P(V)$ with fiber $F=\C P^2$, we get the commutative diagram in cohomology
%\[
%\xymatrix{
%H^*(\C P^2) & H^*\big( P(V) \big)  \ar@{->}[l]^-{i_{P(V)}^*}  & H_G^*(\C P^2).
%\ar@/_1.5pc/ @{.>}[ll]_-{i^*}
%   \ar[l]^-{h^*}
%  }
%\]
%By the $G$-equivariant formality of $\C P^2$, there are classes $\tilde{y}, \tilde{q}_1, \tilde{q}_2 \in H_G^*(\C P^2)$ such that
%\[
%y= h^*\tilde{y}, \quad q_i= h^*\tilde{q}_i, \quad i = 1, 2.
%\]
%The classes $\tilde{y}, \tilde{q}_i$ are $G$-equivariant extensions of the corresponding 
%classes $y, q_i$ in 
%$H^*(\C P^2)$.  (We are using the same notation $y$ to denote $y \in H^*\big(P(V)\big)$
%and its restriction $i_{P(V)}^*y \in H^*(\C P^2)$ to a fiber.)
\tred{A maximal torus of $G=\Unit(3)$ is $T = \Unit(1)\times \Unit(1) \times
\Unit(1)$ embedded in $\Unit (3)$ as the subgroup of diagonal matrices:
\[
  \diag (t_1, t_2, t_3) =
  \begin{bmatrix}
    t_1 & & \\
    & t_2 & \\
    & & t_3
  \end{bmatrix}.
  \] 
The action of $G$ on $\C P^2$ induces an action of $T$ by restriction.}
Since $H_G^*(\C P^2)$ is a subring of $H_T^*(\C P^2)$, the classes
$\tilde{y}, \tilde{q}_1, \tilde{q}_2$ are also in $H_T^*(\C P^2)$.
In \cite[bottom formula, p.~204]{tu2010}, it is shown that the $T$-equivariant cohomology of $\C P^2$ is
\[
H_T^*(\C P^2)=\dfrac{\R [u_1, u_2, u_3, \ty, \tq_1, \tq_2]}
{((1+\ty)(1+\tq_1+\tq_{2}) - \prod(1+u_i))}.
\]

\section{Line Bundles on $BT$}

\tred{Let $G= \Unit(3)$ and $T= \Unit(1) \times \Unit(1) \times \Unit(1)$, a maximal torus in $G$ .}
The maps $\chi_i\colon T \to S^1$ given by $\chi_i \big( \diag (t_1, t_2,
t_3) \big)= t_i$ are group homomorphisms, called \term{characters} of the
torus $T$.
Each character $\chi_i$ can be viewed as a representation of $T$ on
$\C$ via $t\cdot v = \chi_i(t) v$.
We will denote by $\C_{\chi_i}$ the vector space $\C$ with the action
of $T$ given by the character $\chi_i$.

Let $S_{\chi_i} := ET \times _{\chi_i} \C \to BT$ be the line bundle
associated to 
the universal principal $T$-bundle $ET\to BT$ via the character $\chi_i$.
\tred{It is} the homotopy quotient $\big(
\C_{\chi_i} \big)_T$ of $\C$ with the $T$-action given by $\chi_i$.
If $u_i = c_1\big( S_{\chi_i}\big)$ is the first Chern class of the
line bundle $S_{\chi}$ on $BT$, then
\begin{align*}
  H^*(BT) &= H^*\big( B(S^1 \times S^1 \times S^1) \big) =
            H^*(BS^1 \times BS^1 \times BS^1) \\
  &= H^*(\C P^{\infty}) \otimes H^*(\C P^{\infty}) \otimes H^*(\C
    P^{\infty})\\
          &= \R [u_1] \otimes \R[u_2] \otimes \R[u_3] \\
          &= \R [u_1, u_2, u_3].
\end{align*}

\section{The Restriction Formula}

Recall our notations:  the tautological subbundle on $\C P^2$ is the
line bundle $S \to \C P^2$ and its first Chern class is $y=c_1(S)$.
As in Section~\ref{s:projective},  \tred{$\ty = c_1^G(S)$ is a} $G$-equivariant extension.
The commutative diagrams
\[
  \xymatrix{
    & F_T \ar[dd]^-j & & & H_T^*(F) \ar[dl] \\
    F \ar[ur] \ar[dr] & & \Rightarrow & H^*(F) &  \\
    & F_G & & & H_G^*(F) \ar[ul] \ar[uu]_-{j^*}
  }
\]
with $F = \C P^2$ show that $j^*\ty$ is a $T$-equivariant 
extension of $y \in H^2(\C P^2)$.
Since $j^*$ is injective, we may abuse notation and write $\ty$ for $j^*\ty$.

The action of $T$ on $\C P^2$ has three fixed points:
\[
  p_1 = (1, 0, 0), \quad p_2= (0,1,0) \quad p_3 = (0, 0, 1).
\]

\begin{theorem}[The restriction formula] \label{t:restriction}
  Let $i_{p_j}\colon  p_j \to \C P^2$ be the inclusion of the
  point $p_j$ in $\C P^2$.
  Then the restriction map $i_{p_j}^*\colon H_T^*(\C P^2) \to
  H_T^*\big(p_j\big)$ is given by
  \[
    i_{p_j}^*(\ty) = u_j.
  \]
\end{theorem}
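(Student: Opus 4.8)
The plan is to compute $i_{p_j}^*\ty$ by restricting the equivariant line bundle $S$ to the fixed point $p_j$ and identifying the resulting $T$-representation. The guiding principle is naturality of the equivariant first Chern class. Since $p_j$ is a $T$-fixed point, the inclusion $i_{p_j}\colon \{p_j\} \hookrightarrow \C P^2$ is $T$-equivariant, and applying the Borel construction turns it into $BT = \{p_j\}_T \hookrightarrow (\C P^2)_T$. Because $\ty = c_1^T(S)$ is by definition the ordinary first Chern class of the Borel-constructed bundle $S_T \to (\C P^2)_T$, naturality of $c_1$ under pullback gives
\[
i_{p_j}^* \ty = c_1\big( (S_T)|_{BT} \big) = c_1\big( (S|_{p_j})_T \big),
\]
so the whole computation reduces to understanding $S|_{p_j}$ as a $T$-equivariant bundle over a point, that is, as a one-dimensional $T$-representation.

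First I would pin down that representation. The fixed point $p_j$ is the line $\C e_j \subset \C^3$ spanned by the $j$-th standard basis vector, and the fiber of the tautological subbundle $S$ over $p_j$ is precisely that line $\C e_j$. The torus $T$ acts on $\C^3$ through $\diag(t_1,t_2,t_3)$, hence on $\C e_j$ by $t\cdot(c\,e_j)=t_j\,c\,e_j$; in other words $S|_{p_j} = \C_{\chi_j}$, the representation given by the character $\chi_j$.

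It then remains to match $c_1^T(\C_{\chi_j})$ with $u_j$. By the definition recalled in the previous section, the Borel construction of $\C_{\chi_j}$ is the homotopy quotient $(\C_{\chi_j})_T = ET\times_{\chi_j}\C = S_{\chi_j}$, the associated line bundle on $BT$, whose first Chern class is $u_j$ by the very definition of $u_j$. Combining this with the displayed naturality identity yields $i_{p_j}^*\ty = c_1(S_{\chi_j}) = u_j$, as claimed.

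The step I expect to require the most care is not the topology but the bookkeeping of conventions. One must verify that the character appearing on the fiber is $\chi_j$ itself and not $\chi_j^{-1}$ (equivalently, that one is computing $c_1$ of $S$ rather than of $S\dual$), since the two differ by the sign $u_j\mapsto -u_j$; here it is exactly the choice of the tautological subbundle, rather than the hyperplane bundle, that makes the answer come out as $+u_j$. A secondary point is to confirm that the associated-bundle convention defining $S_{\chi_j}$ agrees with the homotopy-quotient convention for $(\C_{\chi_j})_T$, which the previous section has already arranged.
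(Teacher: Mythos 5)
Your proposal is correct and follows essentially the same route as the paper's own proof: identify the fiber $S|_{p_j}$ as the one-dimensional representation $\C_{\chi_j}$, pass to homotopy quotients so that $i_{p_j}^* S_T = (\C_{\chi_j})_T = S_{\chi_j}$, and conclude by naturality of $c_1$ that $i_{p_j}^*\ty = c_1(S_{\chi_j}) = u_j$. Your closing remark about the sign convention ($S$ versus $S\dual$, i.e.\ $\chi_j$ versus $\chi_j^{-1}$) is a sensible precaution but does not change the argument, which matches the paper's step for step.
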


\begin{proof}
  The fiber of the tautological subbundle $S$ at the fixed point $p_1$
  is $\C = \{ (*, 0, 0) \in \C^3 \}$.
  Hence, the action of $T$ on $S$ restricted to the fiber of $S$ at $p_1$ is $\C_{\chi_1}$.
  \tred{Similarly}, the fiber of $S$ at $p_j$ with the action of $T$ is $\C_{\chi_j}$.
  We therefore have a commutative diagram
  \[
    \xymatrix{
      \C_{\chi_j} \ar@{^{(}->}[r] \ar[d] & S \ar[d] \\
      p_j \ar@{^{(}->}[r]_-{i_{p_j}} & \C P^2 .}
    \]
    Taking homotopy quotients gives \tred{a commutative diagram of $T$-spaces and $T$-equivariant maps}
    \begin{equation} \label{e:quotient}
\begin{xy}
(-0.5,10.5)*{S_{\chi_j} = };
(6,5)*{    \xymatrix{
      \big(\C_{\chi_j}\big)_T\  \ar@{^{(}->}[r] \ar[d] & S_T \ar[d] \\
      (p_j)_T\  \ar@{^{(}->}[r] & (\C P^2)_T.}
};
(1,-5)*{BT=};
\end{xy}
\end{equation}

\bigskip

\noindent
It follows that the homotopy quotient $S_T$ of the tautological bundle $S$ restricts to the line bundle $S_{\chi_j}$ over $(p_j)_T=BT$: 
\[
i_{p_j}^* S_T = S_{\chi_j}
\] 
and
\[
  i_{p_j}^* \ty = i_{p_j}^*\big( c_1^T(S) \big) = i_{p_j}^*\big( c_1(S_T) \big)
  = c_1(i_{p_j}^* S_T) = c_1 (S_{\chi_j} ) = u_j.  \tag*{\qed}
\]
\end{proof}

\section{Equivariant Euler Class of the Normal Bundle}

\tred{We will be applying the equivariant localization formula in the following form
\cite[Eq.~(3.8), p.~9]{atiyah--bott84}.

\begin{theorem}
Suppose a torus $T$ acts on a compact oriented manifold $F$, and $\phi \in H_T^*(F)$ is a $T$-equivariant cohomology class of $F$.
Then
\[
\pi_{T*}\phi = \sum_{p \in F^T} \frac{i_p^* \phi}{e^T(\nu_p)} \in H^*(BT),
\]
where $F^T$ is the fixed point set of $T$ acting on $F$, $\nu_p$ is the normal bundle of $p$ in $F$, and $e^T$ is the equivariant Euler class.
\end{theorem}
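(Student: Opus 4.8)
The plan is to deduce the formula from the Atiyah--Bott localization theorem together with the functoriality of the equivariant pushforward and the equivariant self-intersection formula. Write $S = H^*(BT) = \R[u_1, u_2, u_3]$ and let $\mathcal{R}$ be its field of fractions. All the equivariant cohomology groups in sight are $S$-modules, and $\pi_{T*}$, the restrictions $i_p^*$, and the Gysin maps $(i_p)_*$ are $S$-linear. The strategy is to establish the identity first in the localized ring $H_T^*(F) \otimes_S \mathcal{R}$ and then to observe that both sides in fact lie in $S$.

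First I would record two formal facts for an isolated fixed point $p \in F^T$ with inclusion $i_p\colon p \to F$. Since $\pi_T \comp i_p\colon p \to \pt$ is the identity on a point, functoriality of the pushforward gives $\pi_{T*} \comp (i_p)_* = \id$ on $H_T^*(p) = S$, so in particular $\pi_{T*}(i_p)_* 1 = 1$. The equivariant self-intersection formula gives $i_p^*(i_p)_* 1 = e^T(\nu_p)$, while $i_q^*(i_p)_* 1 = 0$ for $q \ne p$ because the fixed points are disjoint. As $\nu_p$ decomposes under $T$ into nonzero weight spaces, $e^T(\nu_p)$ is a nonzero product of characters, hence a unit in $\mathcal{R}$.

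Next I would invoke the localization theorem: the restriction $i^*\colon H_T^*(F) \otimes_S \mathcal{R} \to H_T^*(F^T) \otimes_S \mathcal{R} = \bigoplus_{p} \mathcal{R}$ is an isomorphism. Setting $\eta_p = (i_p)_* 1 \,/\, e^T(\nu_p)$, the two facts above show $i_q^* \eta_p = \delta_{pq}$, so the $\eta_p$ are dual to the restriction maps. Since $i^*$ is injective on the localized module, every class satisfies $\phi = \sum_p (i_p^* \phi)\, \eta_p$ there. Applying the $\mathcal{R}$-linear map $\pi_{T*}$ and using $\pi_{T*} \eta_p = \pi_{T*}\big((i_p)_* 1\big)/e^T(\nu_p) = 1/e^T(\nu_p)$ yields
\[
\pi_{T*}\phi = \sum_{p \in F^T} \frac{i_p^* \phi}{e^T(\nu_p)}.
\]
A priori this holds in $\mathcal{R}$, but the left-hand side lies in $S \subset \mathcal{R}$, so the formula holds in $H^*(BT)$ as stated.

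The main obstacle is the localization theorem itself, which is where the topology, as opposed to the bookkeeping, enters; everything else above is formal. Its proof amounts to showing that $H_T^*(X) \otimes_S \mathcal{R} = 0$ for any $T$-space $X$ with $X^T = \varnothing$. The standard route filters $F$ by orbit type via an equivariant triangulation and reduces, through Mayer--Vietoris, to the vanishing of $H_T^*(T/H) \otimes_S \mathcal{R} = H^*(BH) \otimes_S \mathcal{R}$ for proper closed subgroups $H \subsetneq T$; there the kernel of the surjection $H^2(BT) \to H^2(BH)$ contains a nonzero linear form in $S$ that annihilates $H^*(BH)$ while becoming invertible after localization, forcing the module to vanish. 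I would cite Atiyah--Bott \cite{atiyah--bott84} for this step rather than reproduce it.
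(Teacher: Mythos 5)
Your proof is correct, but there is nothing in the paper to compare it against: the paper does not prove this theorem at all, it simply quotes the formula from Atiyah--Bott \cite[Eq.~(3.8), p.~9]{atiyah--bott84} and immediately proceeds to apply it. What you have written is, in effect, a reconstruction of Atiyah--Bott's own derivation of the integration formula from their localization theorem. The formal part of your argument is sound: functoriality gives $\pi_{T*}\comp(i_p)_* = \id$, the equivariant self-intersection formula gives $i_p^*(i_p)_*1 = e^T(\nu_p)$ and $i_q^*(i_p)_*1 = 0$ for $q \ne p$, the Euler classes become units after localization, and the dual-basis argument then forces $\phi = \sum_p (i_p^*\phi)\,\eta_p$ in the localized module, from which the formula follows by applying $\pi_{T*}$ and noting the left side already lies in $H^*(BT)$. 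You also correctly isolate the genuinely topological input --- the vanishing of $H_T^*(X)\otimes_S \mathcal{R}$ when $X^T = \varnothing$, proved by orbit-type and Mayer--Vietoris induction --- as the one step to cite rather than reprove; this separation of bookkeeping from content is exactly what the paper, treating the formula as a black box in an expository account, does not attempt, and it is the main thing your version buys. Two small points to tidy up: your identification $S = H^*(BT) = \R[u_1,u_2,u_3]$ is specific to the rank-3 torus used later in the paper, whereas the theorem concerns an arbitrary torus (rank $n$ gives $\R[u_1,\dots,u_n]$); and the statement's notation (a sum over points $p \in F^T$, with $\nu_p$ the normal bundle of a point) tacitly assumes the fixed points are isolated --- your proof assumes the same, so the two are consistent, but it is worth stating explicitly, since the general Atiyah--Bott formula sums over fixed-point components rather than points.
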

}

To apply \tred{this} formula, we need to be able to
evaluate the equivariant Euler class $e^T(\nu_p)$ of the normal bundle
$\nu_p$ at a fixed point $p$ of the $T$-action on $\C P^2$.
\tred{The tangent bundle of the projective space $\C P^n$ can be expressed as \cite[Lemma 4.4, p.~43]{milnor--stasheff}
\[
T ( \C P^n ) \simeq \Hom(S,Q) \simeq S\dual \otimes Q.
\]}

When the fixed point $p$ is isolated, the normal bundle $\nu_p$ is
simply the tangent space
\[
  \nu_p = T_p(\C P^2) = \Hom (S_p, Q_p) = S_p\dual \otimes Q_p,
\]
where $S$ is the tautological subbundle of rank 1 and $Q$ the tautological quotient
bundle of rank 2 on $\C P^2$, and $S_p, Q_p$ their fibers at $p$.
Then
\begin{align*}
  e^T(\nu_p) &= e^T(S_p\dual \otimes Q_p) = e\big( (S_p\dual)_T \otimes
               (Q_p)_T \big) \\
             &= c_2 \big( (S_p\dual)_T \otimes (Q_p)_T\big) \quad \text{(Euler class = top Chern class)}.
\end{align*}

As noted in the preceding section, at the fixed point $p_1$, the fiber
$S_{p_1}$ is the representation $\C_{\chi_1}$.
Similarly, the fiber $Q_{p_1}$ is the representation $\C_{\chi_2}
\oplus \C_{\chi_3}$.
Hence,
\[
  (S_{p_1}\dual)_T = (\C_{-\chi_1})_T = S_{\chi_1}\dual \quad \text{
    and }
 (Q_{p_1})_T = (\C_{\chi_2} \oplus \C_{\chi_3})_T = S_{\chi_2} \oplus
 S_{\chi_3},
\]
so that the equivariant Euler class of the normal bundle $\nu_{p_1}$ at $p_1$ is
\begin{align*}
  e^T(\nu_{p_1}) &= c_2\big( S_{\chi_1}\dual \otimes (S_{\chi_2}
                   \oplus S_{\chi_3})\big)\\
                 &= c_2\big( (S_{\chi_1}\dual \otimes S_{\chi_2}) \oplus
                   (S_{\chi_1}\dual \otimes S_{\chi_3})\big)\\
        &=c_1  (S_{\chi_1}\dual \otimes S_{\chi_2}) c_1
          (S_{\chi_1}\dual \otimes S_{\chi_3}) \qquad (\text{Whitney
          product formula})\\
                 &= (u_2 - u_1) (u_3 - u_1).
\end{align*}
Similar formulas hold at $p_2$ and $p_3$:
\[
  e^T(\nu_{p_2}) = (u_1 - u_2) (u_3 - u_2), \qquad
  e^T(\nu_{p_3}) = (u_1 - u_3) (u_2 - u_3).
\]

\section{Evaluation of the Equivariant Localization Formula}

We can finally evaluate the pushforward \eqref{e:projective} using the
equivariant localization formula.
For a complex vector bundle $V$ of rank 3 over a compact oriented manifold $M$,
the structure group can be taken to be the unitary group $U(3)$.
It has maximal torus $T = U(1) \times U(1) \times U(1)$ represented by the $3 \times 3$ diagonal unitary matrices.
The group $U(3)$ acts on $\C^3$ by left multiplication.
This induced action on $\C P^2$ has three fixed points
\[
p_1 = [1, 0, 0], \quad p_2=[0,1,0], \quad p_3 = [0,0,1].
\]

At the fixed point $p_j$, by the restriction formula
(Theorem~\ref{t:restriction}),
\[
  i_{p_j}^*(\ty) = u_j.
\]
Hence,
\[
  i_{p_j}^*\left(\frac{1}{1+\ty}\right) = \frac{1}{1+u_j}.
\]
The equivariant Euler classes $e^T(\nu_{p_j})$ at $p_j$ for $j= 1, 2,
3$ are
\[
  (u_2 - u_1) (u_3 - u_1), \quad (u_1 - u_2) (u_3 - u_2), \quad
  (u_1 - u_3) (u_2 - u_3),
\]
respectively.

By the equivariant localization formula for a $T$-action,
\begin{align} \label{e:pushforward}
  \pi_{T*}\left(\frac{1}{1+\ty}\right) 
  &= \sum_{j=1}^3 \frac{i_{p_j}^*\left(\frac{1}{1+\ty}\right)}{e^T(\nu_{p_j})} \notag\\
  &= \frac{1/(1+u_1)}{(u_2 - u_1)(u_3 - u_1)}
  +\frac{1/(1+u_2)}{(u_1 - u_2)(u_3 - u_2)}
  + \frac{1/(1+u_3)}{(u_1 - u_3)(u_2 - u_3)} \notag\\
  &= \frac{1}{(1+u_1)(1+u_2)(1+u_3)} \qquad (\text{\tred{by hand or} using
      Mathematica})  
\end{align}
This element is in $H^*(BT) = \R[u_1, u_2, u_3]$.
Since it is invariant under the Weyl group $W_G(T) = S_3=$ symmetric group on three letters, it actually
lies in 
\tred{\[
H^*(BG) = H^*(BT)^{S_3} = \R [u_1, u_2, u_3]^{S_3}.
\]}
%Hence,
%\[
%  \pi_{G*} \left( \frac{1}{1+\ty}\right) =
%  \frac{1}{(1+u_1)(1+u_2)(1+u_3)}.
%\]

Using the commutative diagram~\eqref{e:equivariant}, with $E= P(V)$, $F= \C P^2$, $G= U(3)$, and $T= U(1) \times U(1) \times U(1)$, we have
\begin{alignat*}{2}
f_* \frac{1}{1-x} &= f_* \frac{1}{1+y} = f_*h^* \frac{1}{1+\ty}\\
&= \hu^* \pi_{G*} \frac{1}{1+\ty} &\quad&\text{(commutativity of first square in \eqref{e:equivariant})}\\
&= \hu^* \ju^*\pi_{G*} \frac{1}{1+\ty} &\quad&\text{(because $\ju^*$ is an inclusion)}\\
&= \hu^* \pi_{T*}j^* \frac{1}{1+\ty} &\quad&\text{(commutativity of second square in \eqref{e:equivariant})}\\
&= \hu^* \pi_{T*} \frac{1}{1+\ty} &\quad&\text{(because $j^*$ is an inclusion)}\\
&= \hu^* \frac{1}{\prod_{j=1}^3 (1+u_j)} &\quad&(\text{by equation } \eqref{e:pushforward}).
\end{alignat*}
Thus,
\begin{alignat*}{2}
f^*f_* \frac{1}{1-x} &= f^*\hu^* \frac{1}{\prod_{j=1}^3 (1+u_j)}\\
&=h^* \pi_G^* \frac{1}{\prod_{j=1}^3 (1+u_j)} &\quad&\text{(from the commutative diagram~\eqref{e:1})}\\
&=h^*\frac{1}{\prod_{j=1}^3 (1+u_j)} &\quad&\left(\prod_{j=1}^3 (1+u_j) \text{ is in the coefficient ring \tred{$H^*(BG$)}} \right)\\
&=h^*\frac{1}{(1+\ty)(1+\tq_1+\tq_2)}&\quad& \left(\text{In $H_G^*(\C P^2)$, $(1+\ty)(1+\tq_1+\tq_2)-\prod_{j=1}^3 (1+u_j) = 0$}\right)\\
\intertext{\ }\vspace{-40pt}
&= \frac{1}{(1+y)(1+q_1+q_2)}\\
&= \frac{1}{f^*c(V)}.
\end{alignat*}
Since $f^*\colon H^*(M) \to H^*(P(V))$ is injective, we finally obtain
\[
f_* \frac{1}{1-x}  = \frac{1}{c(V)}.
\]

%With $W_G = W_G(T) = S_3$, the $G$-equivariant cohomology is
%\begin{align*}
%  H_G^*(\C P^2) &= H_T^* (\C P^2) ^{W_G} \\
%  &= \left( \left( \frac{ \R[u_1, u_2, u_3, \ty_1, \ty_2, \ty_3]}
%    { \big(\prod (1+\ty_i) - \prod(1+u_i)\big) } \right)^{S_1 \times
%    S_2} \right)^{S_3}\\
%  &= \left( \frac{ \R[u_1, u_2, u_3, \ty_1, \ty_2, \ty_3]}
%    { \big(\prod (1+\ty_i) - \prod(1+u_i)\big) } \right)^{S_3}.
%\end{align*}
%
%From the commutativity of the diagram \eqref{??},
%\[
%  f_* h^*\left( \frac{1}{1- \tx} \right) = \hu^* \pi_{G*} \left(
%    \frac{1}{1- \tx} \right)\]
%so that
%\begin{align*}
%  f^* f_* \left( \frac{1}{1- x} \right)
%  &=  f^* f_* h^*\left( \frac{1}{1- \tx} \right)
%  =  f^*\hu^* \pi_{G*}\left( \frac{1}{1- \tx} \right)
%  =  h*\pi_G^* \pi_{G*}\left( \frac{1}{1- \tx} \right)\\
%  &= h^* \pi_G^* \left( \frac{1}{ \prod (1+u_i)} \right)
%    = h^* \frac{1}{ \prod (1+u_i)}\\
%  &= h^* \frac{1}{ \prod (1+\ty_i)} \qquad \big(\text{since } \sigma_i(u)
%    = \sigma_i(\ty) \text{ in } H_G^*(\C P^2) \big)\\
%  &= \frac{1}{ \prod (1+y_i)} = \frac{1}{c(E)}. \qquad (\text{since }
%    h^*\ty_i = y_i)
%\end{align*}
%By our abuse of notation, $c(E)$ actually stands for $f^*c(E)$. Thus,
%\[
%  f^*\left( f_* \left( \frac{1}{1-x} \right) \right) =
%  f^*\frac{1}{c(E)}.
%\]
%Since $f^*\colon H^*(M) \to H^*(E)$ is injective,
%\[
%  f_* \left( \frac{1}{1-x} \right) = \frac{1}{c(E)},
%\]
%which is the classical formula for the Gysin map of a projective bundle.

\end{document}